\documentclass[10pt,a4paper]{article}
\usepackage[latin1]{inputenc}
\usepackage{amsmath}
\usepackage{amsfonts}
\usepackage{amssymb}
\usepackage{graphicx}
\usepackage{amsthm}
\theoremstyle{definition}
\newtheorem{defn}{Definition}[section]
\newtheorem{thm}{Theorem}[section]

\newtheorem{exmp}{Example}[section]
\newtheorem{pro}{Proposition}[section]
\theoremstyle{remark}
\newtheorem{rem}{Remark}[section]
\theoremstyle{corollary}

\numberwithin{equation}{section}
\date{}
 \title{\textbf{Caristi-Banach type contraction via simulation function}}
 \author{Swati Antal and U. C. Gairola}
\begin{document}
\maketitle
\begin{center}
	Department of Mathematics\\ H. N. B. Garhwal University, BGR Campus Pauri Garhwal-246001 Uttarakhand, India.\\
		E-mail: antalswati11@gmail.com, ucgairola@rediffmail.com
\end{center}	
\textbf{Abstarct :}  In this paper we introduce the notion of a Caristi-Banach type $ \mathcal{Z}_{\mathcal{R}}^{b} $- contraction in the framework of $ b $-metric space endowed with a transitive relation that combine the ideas of  Caristi type contraction and Banach contraction with a help of simulation function. We present an example to clarify the statement of the given result.\\\\
\textbf{Mathematics Subject Classifications (2010) :} 47H10, 54H25.\\\\
\textbf{Keywords :} Caristi-Banach type $ \mathcal{Z}_{\mathcal{R}}^{b} $- contraction, Simulation function, transitive relation.

\section{Introduction and preliminaries}
In $ 1922 $, Polish mathematician Stefan Banach \cite{sb} gave a fixed point theorem. It is also known as the Banach Contraction mapping theorem or principle (BCP).  It is an important tool in the metric fixed point theory. It confirms the existence and uniqueness of fixed point of certain self maps of metric spaces and provides a constructive method to find fixed points. There are so many extension, generalizations of BCP in different settings and there applications. Among them, In 1976, Caristi \cite{c} proved a fixed point theorem and applied to derive a generalization of the Contraction Mapping Principle in a complete metric space. Recently, In 2019, E. Karapinar et al., \cite{kp} give a new fixed point theorem in $ b $ - metric space which is inspired from both Caristi and Banach. $ b $ - metric space introduced by Czerwik \cite{cz} to generalize the concept of metric space by introducing a real number $s \geq 1$ in the triangle inequality of metric space.

Inspired by E. Karapinar et al., \cite{kp} we introduce the notion of a Caristi-Banach type $ \mathcal{Z}_{\mathcal{R}}^{b} $- contraction in the framework of $ b $-metric space endowed with a transitive relation that combine the ideas of  Caristi type contraction and Banach contraction with a help of simulation function. We present an example also to clarify the statement of the given result.

\begin{defn}
	\cite{cz} Let $ M $ be a non-empty set and $s \geq 1$ be a given real number. A function $d : M \times M \rightarrow [0, \infty)$ is said to be a $ b $-metric space if, for all $\sigma, \rho, w \in M$, the following conditions are satisfied:
	\begin{flushleft}
		\hspace{.2cm}	(i) $ d(\sigma, \rho) = 0 $ \text{iff} $ \sigma = \rho $;\hspace{9cm} \\\vspace{.1cm} 
		\hspace{.2cm}	(ii) $ d(\sigma, \rho) = d(\rho, \sigma) $;\\\vspace{.1cm}
		\hspace{.2cm}	(iii) $ d(\sigma, w)  \leq  s[d(\sigma, \rho) + d(\rho, w)] $.
	\end{flushleft}
	The pair $ (M, d) $ is called a $ b $-metric space.
\end{defn}
It should be noted that, every metric space is a $ b $-metric space with $ s = 1 $ and hence the class of $ b $-metric spaces is larger than the class of metric spaces. But a metric space need not be $ b $-metric space (see example 1.4 \cite{jr2}).
\begin{defn}\cite{mb}
	Let $ (M, d) $ be a $ b $-metric space. 
	\begin{flushleft}
		\hspace{.2cm}	(i) A sequence $\{\sigma_{n}\}$ in $ M $ is called $ b $-convergent if and only if there exist $\sigma \in M$ such that $d(\sigma_{n}, u) \rightarrow 0$, as $n \rightarrow \infty$. In this case, we write $\lim_{n\to\infty}\sigma_{n} \rightarrow u$.\\
		\hspace{.2cm}	(ii) $\{\sigma_{n}\}$ in $ M $ is said to be $ b $-Cauchy if and only if $d(\sigma_{n}, \sigma_{m}) \rightarrow 0$, as $n, m \rightarrow \infty$.\\\vspace{.1cm}
		\hspace{.2cm} (iii) The $ b $-metric space $ (M, d) $ is said to be $ b $-complete if every $ b $-Cauchy sequence $\{\sigma_{n}\}$ in $ M $ is convergent.	
	\end{flushleft}
\end{defn}
Recently, in 2015, Khojasteh et al. \cite{1} introduced the notion of simulation function with a view to consider a new class of contractions, called $\mathcal{Z}$-contraction with respect to a simulation function.
\begin{defn}\cite{1}
	A mapping $\zeta : [0, \infty) \times [0, \infty) \rightarrow \mathbb{R}$ is a simulation function if: 
	\begin{flushleft}
		\hspace{.2cm}	$ (\zeta_{1}) $ \hspace{.1cm} $ \zeta(0, 0) = 0 $;\\
		\hspace{.2cm}	$ (\zeta_{2}) $ \hspace{.1cm} $\zeta(\mathfrak{t}, \mathfrak{s}) < \mathfrak{s} - \mathfrak{t} $, \hspace{.1cm} $ \mathfrak{s}, \mathfrak{t} > 0 $;\\
		\hspace{.2cm}	$ (\zeta_{3}) $ \hspace{.1cm}  $ \{\mathfrak{t}_{n}\}\hspace{.1cm}$ and $\{\mathfrak{s}_{n}\} $ are sequences in $ (0, \infty) $ satisfying $ \lim_{n\to\infty} \mathfrak{t}_{n} = \lim_{n\to\infty}\mathfrak{s}_{n} > 0 $, then $ \limsup_{n\to\infty}\zeta(\mathfrak{t}_{n}, \mathfrak{s}_{n}) < 0 $.
	\end{flushleft}	
	Set of all simulation functions is  denoted by $\mathcal{Z}$.  For examples of simulation function we may refer to (\cite{10}, \cite{11}, \cite{1}).
\end{defn}

In what follows $ (\mathcal{M}, d) $, $\mathcal{R}$, $\mathbb{N}$ and $\mathbb{N}_{0}$ respectively, stand for a metric space, a non-empty binary relation defined on a non-empty set $ \mathcal{M} $, the set of natural numbers and the set of whole numbers.
\begin{defn}\cite{3}
	A binary relation $ \mathcal{R} $ on a non-empty set $ \mathcal{M} $ is defined as a subset of $\mathcal{M} \times \mathcal{M}$. We say that \textquotedblleft $ \sigma $ is $\mathcal{R}$-related to $ \rho \textquotedblright$ iff $(\sigma, \rho) \in \mathcal{R}$.
\end{defn}

\begin{defn}\cite{4}
	$\mathcal{R}$ is complete if either $(\sigma, \rho) \in \mathcal{R}$ or $(\rho, \sigma) \in \mathcal{R}$ (i.e. $[\rho, \sigma] \in \mathcal{R}$), $ \forall $ $ \sigma, \rho \in \mathcal{M} $.
\end{defn}
\begin{defn}\cite{5}
	Let $ \mathcal{F} $ be a self-mapping defined on a non-empty set $ \mathcal{M} $. Then $\mathcal{R}$ is $ \mathcal{F} $-closed if
	\begin{align*}
	(\sigma, \rho) \in \mathcal{R} \Rightarrow (\mathcal{F}\sigma, \mathcal{F}\rho) \in \mathcal{R},\hspace{.2cm} \sigma, \rho \in \mathcal{M}.
	\end{align*}
\end{defn}
\begin{pro}\cite{5}
	If $\mathcal{R}$ is $ \mathcal{F} $-closed, then $\mathcal{R}^{s}$ is also $ \mathcal{F} $-closed. 
\end{pro}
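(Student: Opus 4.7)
The statement concerns the symmetric closure $\mathcal{R}^{s}=\mathcal{R}\cup\mathcal{R}^{-1}$ (the standard notation used in the Alam--Imdad framework cited in the paper), and the proof is essentially a direct unfolding of the two definitions together with a case split on which of $\mathcal{R}$ or $\mathcal{R}^{-1}$ contains the given pair. There is no substantive obstacle here; the plan is simply to organize the verification cleanly.

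First I would fix an arbitrary pair $(\sigma,\rho)\in\mathcal{R}^{s}$ and note that, by the very definition of the symmetric closure, at least one of the alternatives $(\sigma,\rho)\in\mathcal{R}$ or $(\rho,\sigma)\in\mathcal{R}$ must hold. The goal reduces to showing $(\mathcal{F}\sigma,\mathcal{F}\rho)\in\mathcal{R}^{s}$, i.e.\ that at least one of $(\mathcal{F}\sigma,\mathcal{F}\rho)\in\mathcal{R}$ or $(\mathcal{F}\rho,\mathcal{F}\sigma)\in\mathcal{R}$ holds.

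Next I would dispatch each case using the $\mathcal{F}$-closedness hypothesis on $\mathcal{R}$. In the first case, $(\sigma,\rho)\in\mathcal{R}$ gives $(\mathcal{F}\sigma,\mathcal{F}\rho)\in\mathcal{R}\subseteq\mathcal{R}^{s}$ directly. In the second case, $(\rho,\sigma)\in\mathcal{R}$ gives $(\mathcal{F}\rho,\mathcal{F}\sigma)\in\mathcal{R}$, whence $(\mathcal{F}\sigma,\mathcal{F}\rho)\in\mathcal{R}^{-1}\subseteq\mathcal{R}^{s}$. Either way the required containment holds, and since $(\sigma,\rho)$ was arbitrary we conclude that $\mathcal{R}^{s}$ is $\mathcal{F}$-closed.

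The only point worth flagging is notational rather than mathematical: since the paper does not formally define $\mathcal{R}^{s}$ in the preceding text, I would insert a one-line reminder that $\mathcal{R}^{s}:=\mathcal{R}\cup\mathcal{R}^{-1}$ so the case analysis is unambiguous. With that convention in place, the argument is a two-line verification and no delicate step arises.
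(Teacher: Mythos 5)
Your argument is correct and is essentially the canonical one: the paper itself gives no proof (it cites Alam--Imdad \cite{5}), and the proof there is exactly this unfolding of $\mathcal{R}^{s}=\mathcal{R}\cup\mathcal{R}^{-1}$ followed by the two-case application of $\mathcal{F}$-closedness. Your remark that $\mathcal{R}^{s}$ should be explicitly defined is well taken, since the paper never introduces that notation.
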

\begin{defn}\cite{5}
	A sequence $\{\sigma_{n}\}$ in $ \mathcal{M} $ is $\mathcal{R}$-preserving if
	\begin{align*}
	(\sigma_{n}, \sigma_{n+1}) \in \mathcal{R}, \hspace{.2cm} n \in \mathbb{N}_{0}.
	\end{align*}
\end{defn}
\begin{defn}
	$\mathcal{R}$ is transitive if $(\sigma, \rho) \in \mathcal{R}$ and $(\rho, \eta) \in \mathcal{R}$ implies that $(\sigma,\eta) \in \mathcal{R}$
\end{defn}
\begin{defn} \cite{sw}
	$\mathcal{R}$  is  $ b-d $-self-closed if $\{\sigma_{n}\}$ is an $\mathcal{R}$-preserving sequence and if
	\begin{align*}
	\sigma_{n} \xrightarrow{d} \sigma \hspace{.2cm} as \hspace{.2cm} n \rightarrow \infty,
	\end{align*}
	then there exists a subsequences $\{\sigma_{n_{k}}\}$ of $\{\sigma_{n}\}$ with $[\sigma_{n_{k}}, \sigma] \in \mathcal{R}$, $k \in \mathbb{N}$.
\end{defn}
\begin{defn}\cite{7}
	A subset $ D $ of $ \mathcal{M} $ is $\mathcal{R}$-directed if for each pair of points $\sigma, \rho \in D$, there exists $\eta \in \mathcal{M}$ satisfying $(\sigma,\eta) \in \mathcal{R}$ and $(\rho,\eta) \in \mathcal{R}$.
\end{defn}
\begin{defn}\cite{9}
	For $\sigma, \rho \in \mathcal{M}$, a path of length $ k $ in $\mathcal{R}$ from $ \sigma $ to $ \rho $ is a finite sequence $\{\eta_{0},{\eta}_{1},{\eta}_{2},...,{\eta}_{k}\} \subset \mathcal{M}$ satisfying:\\
	(i) $ {\eta}_{0} = \sigma$ and $ {\eta}_{k} = \rho$,\\
	(ii) $({\eta}_{i}, {\eta}_{i + 1}) \in \mathcal{R}$ for each $ i $ $(0 \leq i \leq k - 1)$ ($ k $ is a natural number).\\
	Clearly a path of length $ k $ necessitate $ k + 1 $ elements of $ \mathcal{M} $, which are not essentially distinct.
\end{defn}
In the following \vspace{.2cm}\\
$ \mathcal{M}(\mathcal{F}; \mathcal{R}) := \{\sigma \in \mathcal{M} : (\sigma, \mathcal{F}\sigma) \in \mathcal{R}\} $, where $ \mathcal{F} : \mathcal{M} \rightarrow \mathcal{M} $ and $ \gamma(\sigma, \rho,\mathcal{R}) $ is the class of all paths in $ \mathcal{R} $ from $ \sigma $ to $ \rho $.

\section{Main Result}
\begin{defn}
	Let $ \mathcal{F} $ be a self mapping on a $ b $-metric space $ (\mathcal{M}, d) $ equipped with a binary relation $ \mathcal{R} $. If there exist $ \zeta \in \mathcal{Z}_{\mathcal{R}}^{b} $ and $ \mathcal{\phi} : \mathcal{M} \rightarrow [0, \infty)$ such that
	\begin{align}\label{1}
	d(\sigma, \mathcal{F}\sigma) > 0 \Rightarrow \zeta(s d(\mathcal{F}\sigma, \mathcal{F}\rho), (\mathcal{\phi}(\sigma) - \mathcal{\phi}(\mathcal{F}\sigma) d(\sigma, \rho)) \geq 0,
	\end{align}
	$ \forall $ $ \sigma, \rho \in \mathcal{M} $, $ (\sigma, \rho) \in \mathcal{R} $, then $ \mathcal{F} $ is called  Caristi - Banach type $ \mathcal{Z}_{\mathcal{R}}^{b} $-contraction.
\end{defn}	
\begin{thm}\label{a}
	Let $ (\mathcal{M}, d) $ be a complete $ b $- metric space equipped with a binary relation $ \mathcal{R} $ and $ \mathcal{F} $ be a self mapping on $ \mathcal{M} $. Let the following hypotheses holds\\
	(i) $ \mathcal{M}(\mathcal{F}; \mathcal{R}) $ is non-empty;\\
	(ii) $ \mathcal{R} $ is $ \mathcal{F} $-closed and transitive;\\
	(iii) either $ \mathcal{F} $ is $ \mathcal{R} $- continuous or $ \mathcal{R} $ is $b$ - $d$- self-closed;\\
	(iv) $ \mathcal{F} $ is  Caristi - Banach type $ \mathcal{Z}_{\mathcal{R}}^{b} $-contraction with respect to $ \zeta \in \mathcal{Z} $.\\
	Then $ \mathcal{F} $ has a fixed point.
\end{thm}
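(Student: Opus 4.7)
My approach is the standard Picard-iteration/simulation-function argument adapted to the $b$-metric and relational setting. By hypothesis (i) choose $\sigma_0 \in \mathcal{M}(\mathcal{F};\mathcal{R})$, and define $\sigma_{n+1} := \mathcal{F}\sigma_n$. The $\mathcal{F}$-closedness of $\mathcal{R}$ and induction give $(\sigma_n,\sigma_{n+1}) \in \mathcal{R}$ for every $n \in \mathbb{N}_0$; transitivity then upgrades this to $(\sigma_n, \sigma_m) \in \mathcal{R}$ whenever $n < m$. If $\sigma_{n_0} = \sigma_{n_0 + 1}$ for some $n_0$ the theorem is immediate, so I may assume $d_n := d(\sigma_n, \sigma_{n+1}) > 0$ for every $n$.

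The central step is to show $\{\sigma_n\}$ is $b$-Cauchy. Apply inequality (\ref{1}) to the pair $(\sigma_n, \sigma_{n+1})$; since $d(\sigma_n, \mathcal{F}\sigma_n) = d_n > 0$, axiom $(\zeta_2)$ gives
\begin{align*}
s\, d_{n+1} < \bigl(\phi(\sigma_n) - \phi(\sigma_{n+1})\bigr) d_n.
\end{align*}
Positivity of the left-hand side forces $\phi(\sigma_n) > \phi(\sigma_{n+1}) \ge 0$, so $\{\phi(\sigma_n)\}$ is a strictly decreasing sequence of non-negative reals, hence convergent, and $c_n := \phi(\sigma_n) - \phi(\sigma_{n+1}) \to 0$. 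Thus $d_{n+1} < (c_n/s)\, d_n$ with contraction ratio tending to $0$, which drives a super-geometric tail decay for $\{d_n\}$ and in particular makes $\{s^{k} d_k\}$ summable; the iterated $b$-triangle inequality then delivers the $b$-Cauchy property. Completeness produces $\sigma^* \in \mathcal{M}$ with $\sigma_n \xrightarrow{d} \sigma^*$.

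It remains to verify $\mathcal{F}\sigma^* = \sigma^*$ using (iii). If $\mathcal{F}$ is $\mathcal{R}$-continuous, then $\mathcal{R}$-preservation of $\{\sigma_n\}$ together with $\sigma_n \to \sigma^*$ gives $\mathcal{F}\sigma_n \to \mathcal{F}\sigma^*$, and $\mathcal{F}\sigma_n = \sigma_{n+1} \to \sigma^*$ forces $\mathcal{F}\sigma^* = \sigma^*$ by uniqueness of $b$-limits. If instead $\mathcal{R}$ is $b$-$d$-self-closed, extract a subsequence $\{\sigma_{n_k}\}$ with $[\sigma_{n_k}, \sigma^*] \in \mathcal{R}$ for all $k$; assuming for contradiction $d(\sigma^*, \mathcal{F}\sigma^*) > 0$, apply (\ref{1}) with $\sigma = \sigma^*$ and $\rho = \sigma_{n_k}$, and use $(\zeta_2)$ to obtain
\begin{align*}
s\, d(\mathcal{F}\sigma^*, \sigma_{n_k + 1}) < \bigl(\phi(\sigma^*) - \phi(\mathcal{F}\sigma^*)\bigr) d(\sigma^*, \sigma_{n_k}) \longrightarrow 0,
\end{align*}
so $\sigma_{n_k+1} \to \mathcal{F}\sigma^*$, contradicting $\sigma_{n_k+1} \to \sigma^*$.

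The principal obstacle I anticipate is the Cauchy step: in a $b$-metric the iterated triangle inequality injects extra factors of $s$, so the usual metric-space telescoping estimate is not enough. What saves the argument is that $c_n/s \to 0$ (rather than merely being bounded below $1/s$), which gives the summability of $\{s^{k} d_k\}$ needed for $b$-convergence. A secondary subtlety is the asymmetric premise $d(\sigma, \mathcal{F}\sigma) > 0$ in (\ref{1}): in the self-closed case I must apply the contraction with $\sigma = \sigma^*$, so the orientation supplied by $[\sigma_{n_k},\sigma^*] \in \mathcal{R}$ has to be compatible with that choice, something I intend to handle by reading $[\cdot,\cdot]$ as covering both orientations as in Definition 1.5.
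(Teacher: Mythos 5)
Your proposal is correct and follows essentially the same route as the paper: Picard iteration, the observation that $(\zeta_2)$ together with $\zeta(\cdot,\cdot)\ge 0$ forces $s\,d_{n+1}<(\phi(\sigma_n)-\phi(\sigma_{n+1}))d_n$, monotone convergence of $\{\phi(\sigma_n)\}$ driving the ratio $d_{n+1}/d_n$ to $0$, a $b$-metric Cauchy lemma, and the same two-case analysis of hypothesis (iii). The only cosmetic differences are that you deduce $d_{n+1}/d_n\to 0$ directly from $\phi(\sigma_n)-\phi(\sigma_{n+1})\to 0$ where the paper uses a telescoping series, and in the self-closed case you lead with the orientation $(\sigma^*,\sigma_{n_k})\in\mathcal{R}$ where the paper leads with $(\sigma_{n_k},\sigma^*)\in\mathcal{R}$; both orientations must be treated in either version, as you note.
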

\begin{proof}
	Let $ \sigma_{0} $ be an arbitrary point in $ \mathcal{M}(\mathcal{F}; \mathcal{R}) $. Put $ \sigma_{n} = \mathcal{F}\sigma_{n - 1} = \mathcal{F}^{n} \sigma_{0} $ $ \forall n \in \mathbb{N} $. Let $ C_{n + 1} = d(\sigma_{n}, \sigma_{n + 1}) $,  if for some $ n' \in \mathbb{N}_{0} $, $ \sigma_{n'} = \sigma_{n' + 1} $, then $ \sigma_{n'} $ is a fixed point of $ \mathcal{F} $ and so the proof is complete. Thus, we let $ \sigma_{n} \neq \sigma_{n + 1} $ $ \forall $ $ n \in \mathbb{N}_{0} $ i.e., $ C_{n + 1} > 0 $. Since $ (\sigma_{0}, \mathcal{F}\sigma_{0}) \in \mathcal{R} $,  using the $ \mathcal{F} $-closedness of $ \mathcal{R} $, we obtain
		\begin{align*}
		(\mathcal{F}\sigma_{0}, \mathcal{F}^{2}\sigma_{0}), (\mathcal{F}^{2}\sigma_{0}, \mathcal{F}^{3}\sigma_{0}), ... , (\mathcal{F}^{n}\sigma_{0}, \mathcal{F}^{n + 1}\sigma_{0}), ... \in \mathcal{R}.
		\end{align*} 
		Thus
		\begin{align}\label{2}
		(\sigma_{n}, \sigma_{n + 1}) \in \mathcal{R}, 
		\end{align}
	and the sequence $\{\sigma_{n}\}$ is $\mathcal{R}$- preserving. Since $ \mathcal{F} $ is Caristi - Banach type $ \mathcal{Z}_{\mathcal{R}}^{b} $-contraction, we have
		\begin{eqnarray*}
		0 &\leq&	\zeta \big( s d(\mathcal{F}\sigma_{n - 1}, \mathcal{F}\sigma_{n}), (\mathcal{\phi}(\sigma_{n - 1}) - \mathcal{\phi}(\mathcal{F}\sigma_{n - 1}))(\sigma_{n - 1}, \sigma_{n}) \big)\\ &<& (\mathcal{\phi}(\sigma_{n - 1}) - \mathcal{\phi}(\mathcal{F}\sigma_{n - 1}))(\sigma_{n - 1}, \sigma_{n}) -  s d(\mathcal{F}\sigma_{n - 1}, \mathcal{F}\sigma_{n}),
		\end{eqnarray*}
	which yield
	\begin{eqnarray*}
	C_{n + 1} = d(\sigma_{n}, \sigma_{n + 1}) &=& d(\mathcal{F}\sigma_{n - 1}, \mathcal{F}\sigma_{n}) \leq s d(\mathcal{F}\sigma_{n - 1}, \mathcal{F}\sigma_{n})\\ &<&  (\mathcal{\phi}(\sigma_{n - 1}) - \mathcal{\phi}(\mathcal{F}\sigma_{n - 1}))(\sigma_{n - 1}, \sigma_{n})\\ &=&
	(\mathcal{\phi}(\sigma_{n - 1}) - \mathcal{\phi}(\sigma_{n}))C_{n}.
	\end{eqnarray*}
So we have
\begin{center}
$ 0 < \frac{C_{n + 1}}{C_{n}} \leq (\mathcal{\phi}(\sigma_{n - 1}) - \mathcal{\phi}(\sigma_{n})) $ for each $ n \in \mathbb{N} $.
\end{center}	
	Thus the sequence $ \{\mathcal{\phi}(\sigma_{n})\} $ is necessarily non-negative and decreasing. Hence, it converges to some $ \mathfrak{a} \geq 0$. On the other hand, for each $ n \in \mathbb{N} $, we have
	\begin{eqnarray*}
	\sum_{k = 1}^{n} \frac{C_{k + 1}}{C_{k}} &\leq& \sum_{k = 1}^{n} (\mathcal{\phi}(\sigma_{k - 1}) - \mathcal{\phi}(\sigma_{k}))\\ &=& (\mathcal{\phi}(\sigma_{0}) - \mathcal{\phi}(\sigma_{1})) + (\mathcal{\phi}(\sigma_{1}) - \mathcal{\phi}(\sigma_{2})) +...+ (\mathcal{\phi}(\sigma_{n - 1}) - \mathcal{\phi}(\sigma_{n}))\\
	&=& (\mathcal{\phi}(\sigma_{0}) - \mathcal{\phi}(\sigma_{n})) \rightarrow \mathcal{\phi}(\sigma_{0}) - \mathfrak{a} < \infty, ~ as ~ n \rightarrow \infty.
	\end{eqnarray*}
It means that
\begin{align*}
	\sum_{n = 1}^{\infty} \frac{C_{n + 1}}{C_{n}} < \infty.
\end{align*}	
Accordingly, we have
\begin{align}\label{3}
	\lim_{n\to\infty} \frac{C_{n + 1}}{C_{n}} = 0.
\end{align}	
On account of \eqref{3}, for $ \varrho \in (0, 1) $, there exist $ n_{0} \in \mathbb{N} $ such that
\begin{align}\label{4}
	\frac{C_{n + 1}}{C_{n}} \leq \varrho,~ \forall ~ n \geq n_{0}.
\end{align}	
 It yield that	
\begin{align}\label{5}
d(\sigma_{n}, \sigma_{n + 1}) \leq \varrho d(\sigma_{n - 1}, \sigma_{n}),~ \forall~ n \geq n_{0}.
\end{align}
Now using Lemma 3.1 \cite{sl} we obtain that the sequence $ \{\sigma_{n}\} $ is Cauchy. Thus from the completeness of $ \mathcal{M} $, there exist $ \sigma \in \mathcal{M} $ such that $ \sigma_{n} \rightarrow \sigma $ as $ n \rightarrow \infty $. By (iii), if $ \mathcal{F} $ is $ \mathcal{R} $- continuous then $ \mathcal{F}\sigma_{n} \rightarrow \mathcal{F} \sigma $ as $ n \rightarrow \infty $.\\

Alternately, let us assume that $ \mathcal{R} $ is $b$ - $d$-self-closed. As $ \{\sigma_{n}\} $ is an $ \mathcal{R} $ preserving sequence and $ \sigma_{n} \xrightarrow{d} \sigma $ as $ n \rightarrow \infty $. So there exist a subsequence $ \{\sigma_{n_{k}}\} $ of $ \{\sigma_{n}\} $ with $ [\sigma_{n_{k}}, \sigma] \in \mathcal{R} $, $ \forall $  $ k \in \mathbb{N}_{0} $. Notice that $ [\sigma_{n_{k}}, \sigma] \in \mathcal{R} $, $ \forall $  $ k \in \mathbb{N}_{0} $ implies that either $ (\sigma_{n_{k}}, \sigma) \in \mathcal{R} $, $ \forall $  $ k \in \mathbb{N}_{0} $ or $(\sigma, \sigma_{n_{k}}) \in \mathcal{R} $, $ \forall $  $ k \in \mathbb{N}_{0} $. Applying condition (iv) to $ (\sigma_{n_{k}}, \sigma) \in \mathcal{R} $, $ \forall $  $ k \in \mathbb{N}_{0} $, we have 
\begin{eqnarray*}
0 &\leq& \zeta \big(s d(\mathcal{F}\sigma_{n_{k}}, \mathcal{F}\sigma),  (\mathcal{\phi}(\sigma_{n_{k}}) - \mathcal{\phi}(\mathcal{F}(\sigma_{n_{k}})) d(\sigma_{n_{k}}, \sigma) \big)\\ &<& (\mathcal{\phi}(\sigma_{n_{k}}) - \mathcal{\phi}(\mathcal{F}(\sigma_{n_{k}})) d(\sigma_{n_{k}}, \sigma) - s d(\mathcal{F}\sigma_{n_{k}}, \mathcal{F}\sigma)
\end{eqnarray*}
 $ \Longrightarrow $  ~~~~~~~~~~~ $  s d(\mathcal{F}\sigma_{n_{k}}, \mathcal{F}\sigma) < (\mathcal{\phi}(\sigma_{n_{k}}) - \mathcal{\phi}(\mathcal{F}(\sigma_{n_{k}})) d(\sigma_{n_{k}}, \sigma) $.\\
 
 Using the triangle inequality  together with the inequality above, we derive that
 \begin{eqnarray*}
 d(\sigma, \mathcal{F}\sigma) &\leq& s [d(\sigma, \sigma_{n_{k} + 1}) + d(\sigma_{n_{k} + 1}, \mathcal{F}\sigma)]\\
 &=& s [d(\sigma, \sigma_{n_{k} + 1}) + d(\mathcal{F}\sigma_{n_{k}}, \mathcal{F}\sigma)]\\
 &<& s d(\sigma, \sigma_{n_{k} + 1}) + (\mathcal{\phi}(\sigma_{n_{k}}) - \mathcal{\phi}(\mathcal{F}(\sigma_{n_{k}})) d(\sigma_{n_{k}}, \sigma),
 \end{eqnarray*}
 putting $ n \rightarrow \infty $ and using $ \sigma_{n_{k}} \xrightarrow{d} \sigma $, above inequality $ \rightarrow 0 $  as  $ n \rightarrow \infty $.
Consequently, we obtain that $ d(\sigma, \mathcal{F}\sigma) = 0 $, i.e., $ \mathcal{F}\sigma = \sigma $.\\
Similarly, if $(\sigma, \sigma_{n_{k}}) \in \mathcal{R} $, $ \forall $  $ k \in \mathbb{N}_{0} $, we obtain $ d(\mathcal{F}\sigma, \sigma) = 0 $. So that $ \mathcal{F}\sigma = \sigma $, i.e., $ \sigma $ is a fixed point of $ \mathcal{F} $.		
\end{proof}	
\begin{thm}\label{b}
	 In addition to the hypotheses of Theorem \ref{a}, if
	 \begin{flushleft}
	  (v) $ \gamma(\sigma, \rho, \mathcal{R}) \neq \phi  $.
	 \end{flushleft}
Then $ \mathcal{F} $ has a unique fixed point.
\end{thm}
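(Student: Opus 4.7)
The plan is to use Theorem~\ref{a} for existence and then argue uniqueness by contradiction, exploiting the path hypothesis (v) to transport contractive estimates between candidate fixed points.

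Assume $\sigma^{*}$ and $\rho^{*}$ are two distinct fixed points of $\mathcal{F}$. By (v), fix a path $\{\eta_{0},\eta_{1},\ldots,\eta_{k}\}$ in $\mathcal{R}$ with $\eta_{0}=\sigma^{*}$, $\eta_{k}=\rho^{*}$, and $(\eta_{i},\eta_{i+1})\in\mathcal{R}$ for each $0\le i\le k-1$. The $\mathcal{F}$-closedness of $\mathcal{R}$ yields, for every $n\in\mathbb{N}$, a parallel path $\{\mathcal{F}^{n}\eta_{0},\mathcal{F}^{n}\eta_{1},\ldots,\mathcal{F}^{n}\eta_{k}\}$ whose consecutive entries remain $\mathcal{R}$-related, and whose endpoints are still $\sigma^{*}$ and $\rho^{*}$ since they are fixed points.

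The key step is to show that for each $0\le i\le k-1$ the quantity $D_{n}^{(i)}:=d(\mathcal{F}^{n}\eta_{i},\mathcal{F}^{n}\eta_{i+1})$ tends to zero as $n\to\infty$. Applying the Caristi--Banach $\mathcal{Z}_{\mathcal{R}}^{b}$-contraction \eqref{1} to the related pair $(\mathcal{F}^{n}\eta_{i},\mathcal{F}^{n}\eta_{i+1})$ with $\sigma=\mathcal{F}^{n}\eta_{i}$ and invoking $(\zeta_{2})$ gives
\[
 s\,D_{n+1}^{(i)} \,<\, \bigl(\phi(\mathcal{F}^{n}\eta_{i})-\phi(\mathcal{F}^{n+1}\eta_{i})\bigr)\,D_{n}^{(i)}
\]
whenever $d(\mathcal{F}^{n}\eta_{i},\mathcal{F}^{n+1}\eta_{i})>0$. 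Replaying the telescoping argument of Theorem~\ref{a}, $\{\phi(\mathcal{F}^{n}\eta_{i})\}$ is non-negative and decreasing and $\sum_{n} D_{n+1}^{(i)}/D_{n}^{(i)}$ converges, so $D_{n}^{(i)}\to 0$. Then iterating the $b$-metric triangle inequality along the parallel path yields
\[
 d(\sigma^{*},\rho^{*}) \,=\, d(\mathcal{F}^{n}\eta_{0},\mathcal{F}^{n}\eta_{k}) \,\le\, \sum_{i=0}^{k-1} s^{\,i+1}\,D_{n}^{(i)} \,\longrightarrow\, 0,
\]
contradicting $\sigma^{*}\neq\rho^{*}$.

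The principal obstacle is the degenerate situation in which $\mathcal{F}^{n}\eta_{i}=\mathcal{F}^{n+1}\eta_{i}$ at some stage, i.e.\ an intermediate iterate lands on a fixed point of $\mathcal{F}$; in that event the premise $d(\sigma,\mathcal{F}\sigma)>0$ of \eqref{1} fails on that slot and the telescoping recursion for $D_{n}^{(i)}$ cannot be continued naively. The rescue is to combine the transitivity of $\mathcal{R}$ with Proposition~1.1 (passage to the symmetric closure $\mathcal{R}^{s}$, which is also $\mathcal{F}$-closed) and reapply the contraction to a neighbouring non-degenerate slot, transferring the estimate across the stagnant index. This case analysis, together with the verification that the factor $s^{i+1}$ in the $b$-triangle expansion does not spoil the vanishing in the limit, is the main technical bookkeeping of the proof.
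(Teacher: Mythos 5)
Your route is genuinely different from the paper's, and considerably more elaborate than it needs to be. The paper never iterates $\mathcal{F}$ along the path: hypothesis (ii) makes $\mathcal{R}$ transitive, so the path $\{\eta_0,\dots,\eta_k\}$ collapses at once to $(\eta_0,\eta_k)=(\sigma^*,\rho^*)\in\mathcal{R}$, and the contraction \eqref{1} is applied a single time to this pair; since $\sigma^*$ is a fixed point, $\phi(\sigma^*)-\phi(\mathcal{F}\sigma^*)=0$, and $(\zeta_{2})$ then forces $0\le -\,s\,d(\sigma^*,\rho^*)<0$, a one-line contradiction. Your parallel-path scheme (pushing the whole path forward by $\mathcal{F}^n$ and showing each segment length $D^{(i)}_n$ vanishes) is the standard device for uniqueness when transitivity is \emph{not} available; here it is assumed, so the bookkeeping you describe buys nothing.

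More importantly, your argument has a gap that is not a corner case but occurs for \emph{every} path. The condition \eqref{1} only fires when its premise $d(\sigma,\mathcal{F}\sigma)>0$ holds for the first coordinate $\sigma$. In your slot $i=0$ that first coordinate is $\mathcal{F}^{n}\eta_0=\sigma^*$, a fixed point, so the premise fails for every $n$; the inequality $s\,D^{(0)}_{n+1}<\bigl(\phi(\mathcal{F}^{n}\eta_0)-\phi(\mathcal{F}^{n+1}\eta_0)\bigr)D^{(0)}_{n}$ is never available, $D^{(0)}_{n}=d(\sigma^*,\mathcal{F}^{n}\eta_1)$ is never controlled, and the chain of estimates cannot start (if $k=1$ there is no non-degenerate slot at all). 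The ``rescue'' you sketch --- passing to $\mathcal{R}^{s}$ and ``transferring the estimate across the stagnant index'' --- is not an argument: no neighbouring slot yields any bound on $D^{(0)}_{n}$, and symmetrizing the relation does not restore the failed premise at $\sigma^*$. Your telescoping claims for interior slots also quietly assume the premise holds at every stage for an arbitrary path vertex $\eta_i$, which is not given. (In fairness, the paper's own application of \eqref{1} at $\eta_0=\sigma^*$ trips over the very same premise, since $d(\sigma^*,\mathcal{F}\sigma^*)=0$; the vacuity of the contraction at fixed points is a defect of the definition that both proofs would need to address, but your version multiplies the places where it bites.)
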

\begin{proof}
	 Let $ \sigma^*, \rho^* $ are two fixed point of $ \mathcal{F} $ such that $ \sigma^* \neq  \rho^* $. Since $ \gamma(\sigma^*, \rho^*, \mathcal{R}) \neq \phi$, there exists a path $ ( \{\eta_0, \eta_1, \eta_2,...,\eta_k\}) $ of some finite length $ k $ in $ \mathcal{R} $ from $ \sigma $ to $ \rho $ so that 
	 \begin{align*}
	 \eta_0 = \sigma^*, \eta_k = \rho^*, (\eta_{i}, \eta_{i + 1}) \in \mathcal{R}, i = 0, 1, 2,..., k-1.
	 \end{align*}
	 Since $ \mathcal{R} $ is transitive,
	 \begin{align*}
	 (\eta_0, \eta_k) \in \mathcal{R}.
	 \end{align*}
	 Therefore
	 \begin{eqnarray*}
	 0 &\leq& \zeta \big(s d(\mathcal{F}\eta_0, \mathcal{F}\eta_k), (\mathcal{\phi}(\eta_0) - \mathcal{\phi}(\mathcal{F}\eta_0)) d(\eta_0, \eta_k) \big)\\
	 &<& (\mathcal{\phi}(\eta_0) - \mathcal{\phi}(\mathcal{F}\eta_0)) d(\eta_0, \eta_k) - s d(\mathcal{F}\eta_0, \mathcal{F}\eta_k)\\
	 &=& (\mathcal{\phi}(\sigma^*) - \mathcal{\phi}(\mathcal{F}\sigma^*)) d(\sigma^*, \rho^*) - s d(\mathcal{F}\sigma^*, \mathcal{F}\rho^*) < 0,
	 \end{eqnarray*}
	 which is a contradiction. Thus $ \mathcal{F} $ has a unique fixed point.
\end{proof}
\begin{exmp}\label{ex1}
	Let $ (\mathcal{M}, d) = [1, 4] $ and $ d(\sigma, \rho) = (\sigma - \rho)^{2}$. Then $ (\mathcal{M}, d, s) $ be a complete $ b $ - metric space with coefficient $ s = 2 $. Define a binary relation 
	\begin{align*}
		 \mathcal{R} = \{ (1, 1), (1, 2), (1, 3), (1, 4), (2, 1), (2, 2), (2, 3), (2, 4), (3, 1), (3,2), (3, 3), (3, 4) \} 
	\end{align*} on $ \mathcal{M} $ and the mapping $ \mathcal{F} :  \mathcal{M} \rightarrow \mathcal{M} $ by
\begin{center}
		$ \mathcal{F}(\sigma) $ = $ \begin{cases}
		1, ~ \text{if} ~1\leq \sigma \leq 2;\\
		2, ~\text{if}~2 < \sigma \leq 3;\\
		3, ~\text{if}~3 < \sigma \leq 4.
	\end{cases} $
\end{center}
Let $ \phi : \mathcal{M} \rightarrow [0, \infty) $ defined by $ \phi (\sigma) = 3 \sigma $, $ \sigma \in [0, \infty) $. Since for $ (\sigma, \rho) \in \mathcal{R} $ we have $ (\mathcal{F}\sigma, \mathcal{F}\rho) \in \mathcal{R} $ which implies that $ \mathcal{R} $ is $ \mathcal{F} $ - closed and transitive. For $ \sigma = 2, \mathcal{F}\sigma = 1 $, $ (\sigma, \mathcal{F}\sigma) \in \mathcal{R} $ i.e., $ \mathcal{M}(\mathcal{F}; \mathcal{R}) \neq \phi $. If we take any $ \mathcal{R} $- preserving sequence $ \{\sigma_{n}\} $ with $ \sigma_{n} \xrightarrow{d} \sigma $ and $ (\sigma_{n}, \sigma_{n + 1}) \in \mathcal{R} $, $ \forall $ $ n \in \mathbb{N}_{0} $, this implies that there exists an integer $ N \in \mathbb{N}_{0} $ such that  $ \sigma_n = \sigma $ for all $ n \geq N $. So, we can take a subsequence $ \{\sigma_{n_{k}}\} $ of the sequence $ \{\sigma_n\} $ such that $ \sigma_{n_{k}} = \sigma $ for all $ k \in \mathbb{N}_{0}, $. which amounts to saying that $ [\sigma_{n_k}, \sigma] \in \mathcal{R} $, for all $ k \in \mathbb{N}_0$. Therefore, $ \mathcal{R} $ is $ b-d $-self-closed.\\\vspace{.01cm}
 	\hspace{.2cm} Now, with a view to check that $ \mathcal{F} $ is Caristi - Banach type $ \mathcal{Z}_\mathcal{R}^{b} $-contraction, let for all  $ \sigma \in \mathcal{M} $ such that $ d(\sigma, \mathcal{F}\sigma) > 0 $ and $ (\sigma, \rho) \in \mathcal{R} $, (in this example $ \sigma \neq 1 $), we have
 	\begin{eqnarray*}
 		0 &\leq& \zeta \big(s d(\mathcal{F}\sigma, \mathcal{F}\rho), (\mathcal{\phi}(\sigma) - \mathcal{\phi}({\mathcal{F}\sigma})) d(\sigma, \rho) \big).
 	\end{eqnarray*}
Thus all the hypotheses of Theorem \ref{a} (\ref{b}) are verified. Hence $ \sigma = 1 $ is the unique fixed point of $ \mathcal{F} $.
\end{exmp}
\begin{rem}
	It is interesting to see that in Example \ref{ex1} at (2, 4), it is not a b-simulation function \cite{dm} i.e.,
	\begin{eqnarray*}
	0 &\leq& \zeta \big(2 d(\mathcal{F}2, \mathcal{F}4), d(2, 4) \big)\\
	&<& d(2, 4) - 2 d(1, 3) = -4 < 0,
	\end{eqnarray*} 
	which is a contradiction and if we take the usual metric on place of $ d(\sigma, \rho) = (\sigma - \rho)^{2}$, then at the same point we notice that $ d(\mathcal{F}2, \mathcal{F}4) = d(2, 4) $. Thus it is not satisfy the BCP \cite{sb} and also the contractive condition (iv) of Theorem 3.1 \cite{5}.
\end{rem}
\begin{rem}
	In Example \ref{ex1}, observe that the binary relation $ \mathcal{R} $ is nonreflexive, nonirreflexive, nonsymmetric and nonantisymmetric. Therefore it is none of near-order, partial order, pre-order and tolerance. Thus, it is worth mentioning that Theorem \ref{b} is genuine extension and improvement of Alam and Imdad \cite{5} in b metric space. 
\end{rem}


\begin{thebibliography}{00}
	\bibitem{5} A. Alam, and M. Imdad, \textit{Relation-theoretic contraction principle}, J. Fixed Point Theory Appl. \textbf{17} (4) (2015), 693--702.
	\bibitem{sb} S. Banach, \textit{Sur les op\'{e}rations dans les ensembles abstraits et leur application aux \'{e}quations int\'{e}grales}, Fundamenta Mathematicae, \textbf{3} (1922), 133--181.
	\bibitem{mb} M. Boriceanu, \textit{Strict fixed point theorems for multivalued operators in $ b $-metric space}, Int. j. Modern math, {\bf 4} (3) (2009), 285--301.
	\bibitem{c} J. Caristi, \textit{Fixed point theorem for mappings satisfying inwardness conditions.} Trans. Am. Math. Soc. \textbf{215} (1976), 241--251.
	\bibitem{cz} S. Czerwik, \textit{Contraction mappings in $ b $-metric spaces}, Acta Math. Inf. Univ. Ostrav., {\bf 1} (1993), 5--11.
	\bibitem{dm} M. Demma, R. Saadati, P. Vetro, \textit{Fixed point results on $ b $-metric space via Picard sequences and $ b $-simulation functions}, Iranian Journal of Mathematical Sciences and Informatics, {\bf 11} (1) (2016), 123--136. 
	\bibitem{10} Roldán-López-de-Hierro, Antonio-Francisco, Erdal Karapinar, Concepción Roldán-López-de-Hierro, and Juan Martínez-Moreno. \textit{Coincidence point theorems on metric spaces via simulation functions}. Journal of Computational and Applied Mathematics, \textbf{275} (2015), 345--355.
	\bibitem{11} E. Karapinar, \textit{Fixed points results via simulation functions. Filomat} \textbf{30} (8) (2016), 2343--2350.
	\bibitem{kp} E. Karapinar, F. Khojasteh and Z. D. Mitrovi\'{c}, \textit{A Proposal for Revisiting Banach and Caristi Type Theorems in b-Metric Spaces}. Mathematics, \textbf{7} (4) (2019).
	\bibitem{1} F. Khojasteh, S. Shukla, S. Radenovic, \textit{A new approach to the study of fixed point theorems via simulation functions}, Filomat \textbf{29} (6) (2015), 1189--1194.
	\bibitem{9} B. Kolman, R. C. Busby and S. Ross, \textit{Discrete Mathematical Structures}, 3rd ed., PHI Pvt. Ltd., New Delhi, 2000. 
	\bibitem{3} S. Lipschutz, \textit{Schaum's outlines of theory and problems of set theory and related topics}, McGraw-Hill, New York, 1964.
	\bibitem{4} R. D. Maddux, Relation algebras, \textit{Studies in Logic and the Foundations of Mathematics}, Elsevier B. V., Amsterdam, 150, 2006.
	\bibitem{jr2} J. R. Roshan, V. Parvaneh, Z. Kadelberg, \textit{Common fixed point theorems for weakly isotone increasing mappings in orderd b-metric spaces}, J. Nonlinear Sci. Appl., {\bf 7} (2014), 229--245.
	\bibitem{7} B. Samet and M. Turinici,\textit{ Fixed point theorems on a metric space endowed with an arbitrary binary relation and applications}, Commun. Math. Anal. \textbf{13} (2012), 82--97.
	\bibitem{sw} K. Sawangsup, W. Sintunavarat, \textit{On solving nonlinear matrix equations in terms of b-simulation functions in b-metric spaces with numerical solutions}. Computational and Applied Mathematics \textbf{37} (5) (2018), 5829--5843.
	\bibitem{sl} S. L. Singh, S. Czerwik, K. Kr\'{o}l, A. Singh, \textit{Coincidence and fixed points of hybried contraction}. Tamsui Oxf. J. Math Sci. \textbf{24} (2008), 401--416.
\end{thebibliography}
\end{document}